\numberwithin{equation}{section}
\theoremstyle{plain} 
\newtheorem{theorem}{Theorem}[section]
\newtheorem{corollary}[theorem]{Corollary}
\newtheorem{proposition}[theorem]{Proposition}
\theoremstyle{definition} 
\theoremstyle{definition} 
\newtheorem*{ex*}{Example}
\theoremstyle{remark} 
\theoremstyle{remark} 
\newtheorem*{remark*}{Remark}
\numberwithin{equation}{section}
\newcommand{\beqa}{\begin{eqnarray}}
\newcommand{\eeqa}{\end{eqnarray}}
\newcommand{\bseq}{\begin{subequations}}
\newcommand{\eseq}{\end{subequations}}
\newcommand{\dd}{\partial}
\renewcommand{\dd}{{\,\operatorname{d}}}
\newcommand{\al}{\alpha}
\newcommand{\Ga}{\Gamma}
\newcommand{\si}{\sigma}
\newcommand{\de}{\delta}
\newcommand{\be}{\beta}
\renewcommand{\Psi}{\overline{\Phi}}
\newcommand{\up}{\nearrow}
\renewcommand{\up}{\operatorname{\mathsf{up}}}
\newcommand{\lo}{\operatorname{\mathsf{lo}}}
\newcommand{\ii}[1]{\,\mathbf{I}\{#1\}}
\newcommand{\fd}[2]{\frac{\dd#1}{\dd#2}}
\newcommand{\U}{\mathcal{U}}
\renewcommand{\L}{\mathcal{L}}
\newcommand{\vp}{\varepsilon}
\newcommand{\tell}{{\tilde{\ell}}}
\newcommand{\tu}{{\tilde{u}}}
\renewcommand{\le}{\leqslant}
\renewcommand{\ge}{\geqslant}
\begin{document}

\begin{frontmatter}

\title{Geometrically convergent sequences of upper and lower bounds on the Wallis ratio and related expressions}
\runtitle{Bounds on the Wallis ratio}
%\date{\today}

% \author{\fnms{First}  \snm{Author}\corref{}\thanksref{t2}\ead[label=e1]{first@somewhere.com}},
%  \author{\fnms{Second} \snm{Author}\ead[label=e2]{second@somewhere.com}}
%  \and
%  \author{\fnms{Third}  \snm{Author}%
%  \ead[label=e3]{third@somewhere.com}%
%  \ead[label=u1,url]{http://www.foo.com}}
%
%  \thankstext{t2}{Footnote to the first author with the `thankstext' command.}

\begin{aug}
\author{\fnms{Iosif} \snm{Pinelis}\thanksref{t2}\ead[label=e1]{ipinelis@mtu.edu}}
  \thankstext{t2}{Supported by NSF grant DMS-0805946}
\runauthor{Iosif Pinelis}

%\affiliation{Michigan Technological University}

\address{Department of Mathematical Sciences\\
Michigan Technological University\\
Houghton, Michigan 49931, USA\\
E-mail: \printead[ipinelis@mtu.edu]{e1}}
\end{aug}

\begin{abstract}
Sequences of algebraic upper and lower bounds on the Wallis ratio $\Ga(x+1)/\Ga(x+\frac12)$ are given with the relative errors that converge to $0$ geometrically and uniformly on any interval of the form $[x_0,\infty)$ for $x_0>-\frac12$; moreover, the relative and absolute errors converge to $0$ as $x\to\infty$. 
These conclusions are based on corresponding results for the digamma function $\psi:=\Ga'/\Ga$. 
Relations with other relevant results are discussed, as well as the corresponding computational aspects. 
This work was motivated by studies of exact bounds involving the Student probability distribution. 
\end{abstract}

%\subjclass[2000]{60E15, 62G10, 62G15, 60G50, 62G35}
% 62G10    	Hypothesis testing
%  62G15    	Tolerance and confidence regions
%  60G50    	Sums of independent random variables; random walks
%   62G35    	Robustness
  
%
%\keywords{probability inequalities; Rade\-macher random variables; sums of independent random variables; Student's test; self-normalized sums}

%33B15   	Gamma, beta and polygamma functions
%26D07   	Inequalities involving other types of functions
%26D15   	Inequalities for sums, series and integrals
%41A17   	Inequalities in approximation (Bernstein, Jackson, Nikol?skii(-type inequalities)
%33F05   	Numerical approximation and evaluation [See also 65D20]
%65D20   	Computation of special functions, construction of tables [See also 33F05]

%62E15   	Exact distribution theory
%62E17   	Approximations to distributions (nonasymptotic)
%62E20   	Asymptotic distribution theory
%60E15   	Inequalities; stochastic orderings

\begin{keyword}[class=AMS]
\kwd[Primary ]{33B15}
\kwd{26D07}
\kwd{26D15}
\kwd{41A17}
\kwd[; secondary ]{33F05}
\kwd{65D20}
\kwd{60E15}
\kwd{62E15}
\kwd{62E17}
\end{keyword}

% 60B11    	Probability theory on linear topological spaces
% 46B09    	Probabilistic methods in Banach space theory [See also 60Bxx]
% 46B20    	Geometry and structure of normed linear spaces
% 46B10    	Duality and reflexivity [See also 46A25]

\begin{keyword}
\kwd{Gamma function}
\kwd{digamma function}
\kwd{Wallis ratio}
\kwd{upper bounds}
\kwd{lower bounds}
\kwd{exact bounds}
\kwd{inequalities in approximation}
\kwd{Student's distribution}
\kwd{Student's statistic}
\kwd{probability inequalities}
\end{keyword}

\end{frontmatter}

\settocdepth{chapter}

\tableofcontents 
%%%%%%%%%%%%%%%%%{\small\tableofcontents} 

\settocdepth{subsubsection}

\theoremstyle{plain} 
\numberwithin{equation}{section}

\eject

\section{Summary and discussion}\label{intro} 

%\subsection{Summary}\label{summary} 
My interest to the Wallis ratio 
%The Wallis ratio may be defined as % by the formula 
\begin{equation*}
	W(x):=\frac{\Ga(x+1)}{\Ga(x+\frac12)}  
\end{equation*}
was stimulated by recent work \cite{BE-student,closeness-student} on exact bounds involving Student's probability distribution, with the density function $f_p$ defined by the formula
\begin{align*}
	f_p(x)&:=\frac{ \Gamma
   \left(\frac{p+1}{2}\right)}{\sqrt{\pi p}\, \Gamma \left(\frac{p}{2}\right)}\,
   \left(1+\frac{x^2}{p}\right)^{-(p+1)/2} 
\end{align*}
for all real $x$; here $p$ is a positive parameter referred to as the number of degrees of freedom. 
Let us extend this definition by continuity to $p=\infty$, so that $f_\infty$ is the probability density function of the standard normal distribution: 
$%\begin{equation}
	f_\infty(x)=\frac1{\sqrt{2\pi}}\,e^{-x^2/2}
$ %\end{equation}
for all real $x$. 
Thus, 
\begin{equation}\label{eq:r=W}
r(p):=\frac{f_p(0)}{f_\infty(0)}=\sqrt{\frac2p}\,W\Big(\frac{p-1}2\Big)	
\end{equation}
for all $p\in(0,\infty)$; equivalently, 
\begin{equation}\label{eq:W=r}
	W(x)=r(2x+1)\sqrt{x+\frac12}
\end{equation}
for all $x\in(-\frac12,\infty)$. 
Therefore, bounding the Wallis ratio $W(x)$ is equivalent to bounding the ratio $r(p)$, which is what we shall do, as the bounds on $r(p)$ are slightly simpler to express and easier to operate with. 
The ratio $r(p)$ may also be slightly more natural, since $r(p)\to1$ as $p\to\infty$.  

\begin{theorem}\label{th:}
Take any $p\in(0,\infty)$. Then one has the identities 
\begin{equation}\label{eq:identity}
	r(p)=U_\infty(p)=L_\infty(p),    
\end{equation} 
where 
\begin{align}%
	U_k(p)&:=\exp\sum_{m=1}^k 2^{-1-m} \si_{p,m}, \label{eq:U} \\ 
	\si_{p,m}&:=\sum _{j=0}^m (-1)^j \binom{m}{j}\,\ln(p+j), \label{eq:si} \\ %\quad\text{and}\quad
	  L_k(p)&:=\sqrt{\frac p{p+1}}\,\frac1{U_k(p+1)}, \label{eq:LL} %\\ %\quad\text{and}\quad 
\end{align} 
and $k\in\{0,1,\dots,\infty\}$; 
the series $\sum_{m=1}^\infty 2^{-1-m} \si_{p,m}$ converges uniformly in $p\in[p_0,\infty)$, for any $p_0\in(0,\infty)$. 
\big(As usual, the sum of an empty family is defined to be $0$; so, $U_0(p)=1$.\big) 

Moreover, for all $k\in\{0,1,\dots\}$ the bracketing inequalities 
\begin{equation}\label{eq:L<r<U}
	L_k(p)<L_{k+1}(p)<r(p)<U_{k+1}(p)<U_k(p) 
\end{equation}
hold. For the relative errors 
\begin{align*}
	\de_{\up,k}(p)&:=\frac{U_k(p)-r(p)}{r(p)}=\Big|\frac{U_k(p)-r(p)}{r(p)}\Big|\quad\text{and}
	%\label{eq:de_up}
	\\ 
	\de_{\lo,k}(p)&:=\frac{r(p)-L_k(p)}{r(p)}=\Big|\frac{L_k(p)-r(p)}{r(p)}\Big| %\label{eq:de_lo}
\end{align*}
of the upper and lower approximations one has 
\begin{equation}\label{eq:err bound}
	0<\de_{\lo,k}(p)<\de_{\up,k}(p)<\exp(\rho^*_{p,k})-1, 
\end{equation}
where %$k>2-2p$ and 
\begin{equation}\label{eq:rrho}
	\rho^*_{p,k}:=\frac1{2^{k+1}}\,\Big(\frac{(k+1)!}{p^{k+1}}\bigwedge\ln\frac{p+1}p\Big), 
\end{equation}
which converges to $0$ as $k\to\infty$ (no slower than geometrically) uniformly in $p\in[p_0,\infty)$, for any $p_0\in(0,\infty)$, and also converges, for each $k\in\{0,1,\dots\}$, to $0$ as $p\to\infty$. 
\end{theorem}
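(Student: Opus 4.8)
The plan is to reduce every statement to the behaviour of the upper quantities $U_k$ together with one representation of the increments $\si_{p,m}$, and to obtain the lower-side claims by a reflection. First I would record the Gamma-recursion identity $r(p)\,r(p+1)=\sqrt{p/(p+1)}$, an elementary consequence of $\Ga(z+1)=z\,\Ga(z)$ applied to $r(p)=\sqrt{2/p}\,\Ga(\tfrac{p+1}2)/\Ga(\tfrac p2)$; in logarithmic form it reads $\ln r(p)+\ln r(p+1)=-\tfrac12\ln\tfrac{p+1}p$. Since $L_k(p)=\sqrt{p/(p+1)}\,/\,U_k(p+1)$ by definition, this reflection converts each assertion about $L_k(p)$ into one about $U_k(p+1)$: $L_\infty(p)=r(p)$ follows from $U_\infty(p+1)=r(p+1)$; the chain $L_k<L_{k+1}<r$ follows from $U_k(p+1)>U_{k+1}(p+1)>r(p+1)$ by taking reciprocals; and the bound on $\de_{\lo,k}$ follows from the one on $\de_{\up,k}$.

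\textbf{The key representation.} The technical heart is, with $\Delta F(p):=F(p+1)-F(p)$ acting in $p$ and $\si_{p,m}=(-1)^m\Delta^m\ln(p)$, the formula
\[
\si_{p,m}=-\int_0^\infty e^{-ps}\,\frac{(1-e^{-s})^m}{s}\,\d s<0,
\]
obtained by writing $\ln\tfrac{p+1}p=\int_0^\infty e^{-ps}\tfrac{1-e^{-s}}{s}\,\d s$ and applying $\Delta$ under the integral $m-1$ times (each application multiplies the integrand by $e^{-s}-1$). From it I read off, at once: all $\si_{p,m}$ are negative, so $U_k(p)$ is strictly decreasing in $k$ and $U_k(p)>r(p)$; the quantity $|\si_{p,m}|$ is decreasing in both $m$ (as $0<1-e^{-s}<1$) and $p$; and $|\si_{p,m}|\le\ln\tfrac{p+1}p$ (from $1-e^{-s}\le1$) as well as $|\si_{p,m}|\le(m-1)!/p^m$ (from $1-e^{-s}\le s$).

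\textbf{The identity $U_\infty=r$.} I would show that $h:=\ln U_\infty$ and $g:=\ln r$ obey the same first-order functional equation. Using $\Delta^m\ln(p+1)=\Delta^m\ln(p)+\Delta^{m+1}\ln(p)$ and reindexing gives the exact telescoping
\[
\ln U_k(p)+\ln U_k(p+1)=-\tfrac12\ln\tfrac{p+1}p-(-\tfrac12)^{k+1}\Delta^{k+1}\ln(p);
\]
letting $k\to\infty$, where the remainder is $O\!\big(2^{-k}\ln\tfrac{p+1}p\big)\to0$, yields $h(p)+h(p+1)=-\tfrac12\ln\tfrac{p+1}p=g(p)+g(p+1)$. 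Hence $\phi:=g-h$ satisfies $\phi(p)=-\phi(p+1)$, so $\phi(p)=\phi(p+2n)$ for all $n$; since $g(p)\to0$ (as $r(p)\to1$, noted in the text) and $h(p)\to0$ (because $|h(p)|\le\tfrac12\ln\tfrac{p+1}p$), we conclude $\phi\equiv0$, i.e.\ $U_\infty=r$. Uniform convergence of the defining series on $[p_0,\infty)$ is then the Weierstrass $M$-test with $|2^{-1-m}\si_{p,m}|\le2^{-1-m}\ln\tfrac{p_0+1}{p_0}$.

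\textbf{Error bounds and the main obstacle.} Writing $R_k(p):=\ln U_k(p)-\ln r(p)=\sum_{m>k}2^{-1-m}|\si_{p,m}|>0$, one has $\de_{\up,k}(p)=e^{R_k(p)}-1$ and, by the reflection, $\de_{\lo,k}(p)=1-e^{-R_k(p+1)}$. The hard part is bounding $R_k(p)$: the naive termwise estimate $|\si_{p,m}|\le(m-1)!/p^m$ makes the tail \emph{diverge}, so instead I would interchange sum and integral and do the geometric sum first,
\[
R_k(p)=\int_0^\infty\frac{e^{-ps}}{s}\,\frac{\big((1-e^{-s})/2\big)^{k+1}}{2-(1-e^{-s})}\,\d s<\frac1{2^{k+1}}\,|\si_{p,k+1}|,
\]
using $2-(1-e^{-s})=1+e^{-s}>1$; the two bounds on $|\si_{p,k+1}|$ then give exactly $R_k(p)<\rho^*_{p,k}$. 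The chain $0<\de_{\lo,k}<\de_{\up,k}<e^{\rho^*_{p,k}}-1$ now follows from $R_k(p+1)<R_k(p)$ (monotonicity in $p$) and the elementary $1-e^{-x}\le x\le e^{x}-1$, while the bracketing $U_{k+1}<U_k$, $r<U_{k+1}$ merely restates $\si_{p,m}<0$ and its $L$-counterpart is the reflection. Finally the limits of $\rho^*_{p,k}$ are read off branch by branch: the factor $2^{-(k+1)}$ with $\ln\tfrac{p+1}p\le\ln\tfrac{p_0+1}{p_0}$ gives geometric decay uniform on $[p_0,\infty)$, and the branch $(k+1)!/p^{k+1}$ gives $\rho^*_{p,k}\to0$ as $p\to\infty$ for each fixed $k$.
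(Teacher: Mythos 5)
Your proof is correct, but it follows a genuinely different route from the paper's. The paper derives Theorem~\ref{th:} by \emph{integrating} the digamma bounds of Proposition~\ref{prop:r'} over $[p,\infty)$: it first proves the partial-fraction identity $\frac{m!}{p^{(m+1)}}=\sum_{j=0}^m\frac{(-1)^j\binom mj}{p+j}$, deduces $\int_p^\infty\ell_k(s)\,\dd s=-\ln U_k(p)$ and $\int_p^\infty u_k(s)\,\dd s=-\ln L_k(p)$, and then obtains both the bracketing inequalities \eqref{eq:L<r<U} and the error bound \eqref{eq:err bound} by integrating the corresponding statements \eqref{eq:lnr'-identity&ineqs} and \eqref{eq:psi-error} termwise (the identity \eqref{eq:identity} falling out as a by-product of the error bound). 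You instead work directly at the level of $\ln U_k$, using the Frullani-type representation $\si_{p,m}=-\int_0^\infty e^{-ps}(1-e^{-s})^m s^{-1}\,\dd s$ of the iterated differences; this single formula delivers the sign, the monotonicity in $m$ and $p$, and both bounds $|\si_{p,m}|\le(m-1)!/p^m$ and $|\si_{p,m}|\le\ln\frac{p+1}p$ at once. Your two most substantive departures are (i) establishing $U_\infty=r$ via the exact telescoping $\ln U_k(p)+\ln U_k(p+1)=-\frac12\ln\frac{p+1}p-2^{-k-1}\si_{p,k+1}$ and the resulting functional equation $\phi(p)=-\phi(p+1)$ with $\phi\to0$ at infinity, rather than via the digamma identity; and (ii) bounding the tail $R_k(p)=\sum_{m>k}2^{-1-m}|\si_{p,m}|$ by summing the geometric series \emph{inside} the integral — you correctly identify that the naive termwise bound diverges, which is precisely the difficulty the paper circumvents by keeping the Pochhammer denominators $p^{(m+1)}$ (which give the ratio test $a_{m+1}/a_m<\frac12$) before integrating. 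Your approach is self-contained and even yields the marginally sharper constant $k!$ in place of $(k+1)!$ in \eqref{eq:rrho}; what it gives up is the by-product the paper explicitly values, namely the rational upper and lower bounds on $\psi(x+1)-\psi(x+\frac12)$ of Proposition~\ref{prop:r'}, which in the paper's scheme come for free from the same computation.
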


The necessary proofs are deferred to Section~\ref{proofs}. 

Note that the definition \eqref{eq:LL} of the lower bound $L_k(p)$ originates in the identity
\begin{equation}\label{eq:r(p)r(p+1)}
r(p)r(p+1)=\sqrt{\frac p{p+1}} 	
\end{equation}
for all $p>0$, 
which easily follows from $\Ga(x+1)=x\Ga(x)$. 
Equivalently, 
\begin{equation}\label{eq:W(x)W(x+1/2)}
W(x)W(x+\tfrac12)=x+\tfrac12 	
\end{equation}
for all $x>-\frac12$. 
Thus, given an upper bound on $W(x)$ \big(or on $r(p)$\big), one automatically has a corresponding lower bound --- and vice versa; this is a well-known trick. 

An important feature of the lower and upper bounds $L_k(p)$ and $U_k(p)$ is that they are all algebraic. Therefore, as is done in \cite{closeness-student}, one can use the well-known result by Tarski \cite{tarski48,collins98} to solve, in a completely algorithmic manner, systems of equalities/inequalities involving such bounds. 
Let us list here the initial 5 upper bounds on $r(p)$:
\begin{gather*}
U_0(p)=1,
\quad
U_1(p)=\Big(\frac1{1+a}\Big)^{1/4}, \quad
U_2(p)=\Big(\frac{1+2a}{(1+a)^4}\Big)^{1/8}, \\
U_3(p)=\Big(\frac{(1+2a)^5}{(1+a)^{11}(1+3a)}\Big)^{1/16}, \quad
U_4(p)=\Big(\frac{(1+2a)^{16} (1+4a)}{(1+a)^{26} (1+3a)^6}\Big)^{1/32}, 
\end{gather*}
where $a$ stands for $1/p$. 
Recall that the corresponding lower bounds can be obtained by \eqref{eq:LL}. 

Recalling also \eqref{eq:W=r} and assuming the correspondence 
\begin{equation}\label{eq:corr}
	(-\tfrac12,\infty)\ni\tfrac{p-1}2=x\longleftrightarrow p=2x+1\in(0,\infty), 
\end{equation}
introduce 
\begin{equation}\label{eq:UU,LL}
		\U_k(x)=U_k(p)\sqrt{p/2} \quad\text{and}\quad 
	\L_k(x)=L_k(p)\sqrt{p/2}. 
\end{equation}
Then 
one immediately has the following corollary of Theorem~\ref{th:}: 

\begin{corollary}\label{cor:W}
Relations \eqref{eq:identity} and \eqref{eq:L<r<U} can be rewritten as 
\begin{equation}\label{eq:W-identity&ineqs}
	\L_k(x)<\L_{k+1}(x)<\L_\infty(x)=W(x)=\U_\infty(x)<\U_{k+1}(x)<\U_k(x)    
\end{equation} 
for all $x\in(-\tfrac12,\infty)$ and $k\in\{0,1,\dots\}$. 
Moreover, the relative errors of the bounds $\U_k(x)$ and $\L_k(x)$ 
converge geometrically and uniformly on any interval of the form $[x_0,\infty)$ for any $x_0>-\frac12$; also, the relative errors converge to $0$ as $x\to\infty$.  
Furthermore, it follows from \eqref{eq:err bound}--\eqref{eq:rrho} and the inequality $W(x)<\U_0(x)=\sqrt{x+\frac12}$ that, for each $k\in\{0,1,\dots\}$,   
the absolute errors $\U_k(x)-W(x)$ and $W(x)-\L_k(x)$ 
converge to $0$, too, as $x\to\infty$. 
\end{corollary}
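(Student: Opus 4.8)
The plan is to deduce everything by transporting Theorem~\ref{th:} through the change of variables \eqref{eq:corr} and the rescaling by the positive factor $\sqrt{p/2}$. First I would record that, under \eqref{eq:corr}, one has $\sqrt{p/2}=\sqrt{x+\tfrac12}$ and, by \eqref{eq:W=r}, $W(x)=r(p)\sqrt{p/2}$; likewise $\U_k(x)=U_k(p)\sqrt{p/2}$ and $\L_k(x)=L_k(p)\sqrt{p/2}$ by \eqref{eq:UU,LL}. Since $\sqrt{p/2}>0$, multiplying the identity \eqref{eq:identity} and the bracketing chain \eqref{eq:L<r<U} through by $\sqrt{p/2}$ preserves both the equalities and the strict inequalities, yielding \eqref{eq:W-identity&ineqs} at once; as $x$ ranges over $(-\tfrac12,\infty)$, the image $p=2x+1$ ranges over $(0,\infty)$, so nothing is lost.

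The crucial point for the error statements is that the rescaling cancels in every relative error. Indeed,
\[
\frac{\U_k(x)-W(x)}{W(x)}=\frac{U_k(p)-r(p)}{r(p)}=\de_{\up,k}(p)
\quad\text{and}\quad
\frac{W(x)-\L_k(x)}{W(x)}=\de_{\lo,k}(p),
\]
so the relative errors of $\U_k$ and $\L_k$ are \emph{identical} to those already controlled in \eqref{eq:err bound}, and the bound $0<\de_{\lo,k}(p)<\de_{\up,k}(p)<\exp(\rho^*_{p,k})-1$ transfers verbatim. I would then translate the uniformity: since $x\mapsto p=2x+1$ is a strictly increasing bijection of $[x_0,\infty)$ onto $[p_0,\infty)$ with $p_0=2x_0+1>0$, uniform convergence in $p$ on $[p_0,\infty)$ becomes uniform convergence in $x$ on $[x_0,\infty)$; the geometric rate in $k$ survives because the cancelled factor $\sqrt{p/2}$ does not depend on $k$, and ``$x\to\infty$'' coincides with ``$p\to\infty$''.

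For the absolute errors I would write (absolute error) $=$ (relative error) $\cdot\,W(x)$, bound the relative error by $\exp(\rho^*_{p,k})-1$ via \eqref{eq:err bound}, and bound $W(x)<\U_0(x)=\sqrt{x+\tfrac12}$. It then remains to see that the product tends to $0$ as $x\to\infty$. Retaining only the second entry of the minimum in \eqref{eq:rrho} gives $\rho^*_{p,k}\le 2^{-(k+1)}\ln\frac{p+1}p$, whence $\exp(\rho^*_{p,k})-1\le(1+\tfrac1p)^{2^{-(k+1)}}-1=O(1/p)$ as $p\to\infty$, while $\sqrt{x+\tfrac12}=\sqrt{p/2}=O(\sqrt p)$. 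Hence, for each fixed $k$, both $\U_k(x)-W(x)$ and $W(x)-\L_k(x)$ are $O(p^{-1/2})\to0$ (the lower error because $\de_{\lo,k}<\de_{\up,k}$).

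The only step that is more than relabeling is this last one, and even there the single obstacle is bookkeeping: one must confirm that the shrinking relative error, of order $1/p$, beats the growing rescaling factor, of order $\sqrt p$. Since the resulting order is $p^{-1/2}$, the margin is comfortable and no genuinely new estimate beyond Theorem~\ref{th:} is needed; the remainder of the corollary is a direct transcription.
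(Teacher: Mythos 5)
Your proposal is correct and follows the same route the paper intends: the paper states the corollary as an immediate consequence of Theorem~\ref{th:}, obtained by multiplying \eqref{eq:identity} and \eqref{eq:L<r<U} by the positive, $k$-independent factor $\sqrt{p/2}=\sqrt{x+\tfrac12}$ (which cancels in the relative errors) and, for the absolute errors, by combining \eqref{eq:err bound}--\eqref{eq:rrho} with $W(x)<\U_0(x)=\sqrt{x+\tfrac12}$ exactly as you do. Your final estimate $\bigl(1+\tfrac1p\bigr)^{2^{-(k+1)}}-1=O(1/p)$ against the factor $O(\sqrt p)$ correctly supplies the one small verification the paper leaves implicit.
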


Theorem~\ref{th:} is based on the following proposition, which provides geometrically convergent sequences of rational upper and lower bounds on $\psi(x+1)-\psi(x+\tfrac12)$ and thus 
may be of independent interest. 
Here, as usual, $\psi$ denotes the digamma function, that is, the logarithmic derivative of $\Ga$: 
\begin{equation}\label{eq:psi}
	\psi:=\frac{\Ga'}{\Ga}.  
\end{equation}

\begin{proposition}\label{prop:r'}
Take any $p\in(0,\infty)$ and $k\in\{0,1,\dots\}$. 
Then 
\begin{equation}\label{eq:lnr'-identity&ineqs} % \ell:=(old \tJ/p)
	\ell_k(p)<\ell_{k+1}(p)<\ell_\infty(p)=\fd{\ln r(p)}p=u_\infty(p)<u_{k+1}(p)<u_k(p),    
\end{equation} 
%for all $p\in(0,\infty)$ and $k\in\{0,1,\dots,\infty\}$, 
where, for $k\in\{0,1,\dots,\infty\}$,  
\begin{equation}\label{eq:ell,u}
	\ell_k(p):=\sum _{m=1}^k \frac{2^{-1-m} m!}{p^{(m+1)}}, \quad %\text{and}\quad
	u_k(p):=\frac1{2p(p+1)}-\ell_k(p+1), 
\end{equation}
and  
$$y^{(m)}:=\prod _{j=0}^{m-1}(y+j)=\frac{\Ga(y+m)}{\Ga(y)}$$ 
is the Pochhammer symbol, with $y^{(0)}=1$, corresponding to the general convention that the product of an empty family is defined as $1$. 
Equivalently,
\begin{equation}\label{eq:psi-identity&ineqs} % \ell:=(old \tJ/p)
	\tell_k(x)<\tell_{k+1}(x)<\tell_\infty(x)=\psi(x+1)-\psi(x+\tfrac12)=\tu_\infty(x)<\tu_{k+1}(x)<\tu_k(x)  
\end{equation}  
for all $x\in(-\tfrac12,\infty)$ and $k\in\{0,1,\dots\}$, where, for all $k\in\{0,1,\dots,\infty\}$,   
\begin{equation*}
	\tell_k(x):=\tfrac1p+2\ell_k(p),\quad\text{and}\quad
	\tu_k(x):=\tfrac1p+2u_k(p),  
\end{equation*} 
again assuming the correspondence \eqref{eq:corr}. 
Moreover, one has the following bounds on the errors: %if $k>2-2p$ and $p>0$ then 
\begin{align}
	0<%e_k(p):=
	u_k(p)-\fd{\ln r(p)}p<\fd{\ln r(p)}p-\ell_k(p)
	&<\frac{2^{-k-1}(k+1)!}{p^{(k+2)}} %\label{eq:psi-error1} 
	&<\frac{2^{-k-1}}p, \label{eq:psi-error}
\end{align}
which converges to $0$ as $k\to\infty$ (no slower than geometrically) uniformly in $p\in[p_0,\infty)$, for any $p_0\in(0,\infty)$, and also converges, for each $k\in\{0,1,\dots\}$, to $0$ as $p\to\infty$.  
\end{proposition}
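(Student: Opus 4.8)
The plan is to reduce the whole proposition to one closed integral representation of $\fd{\ln r(p)}p$ and to read every assertion off it. Starting from $r(p)=\sqrt{2/p}\,\Ga\big(\tfrac{p+1}2\big)\big/\Ga\big(\tfrac p2\big)$, which is \eqref{eq:r=W} together with $W(x)=\Ga(x+1)/\Ga(x+\tfrac12)$, I would take logarithms and differentiate, obtaining
\[
	\fd{\ln r(p)}p=-\frac1{2p}+\tfrac12\Big(\psi\big(\tfrac{p+1}2\big)-\psi\big(\tfrac p2\big)\Big).
\]
Feeding in Gauss's representation $\psi(z)=-\gamma+\int_0^1\frac{1-t^{z-1}}{1-t}\,\dd t$ and substituting $t=u^2$ collapses the digamma difference to $\psi\big(\tfrac{p+1}2\big)-\psi\big(\tfrac p2\big)=2\int_0^1\frac{u^{p-1}}{1+u}\,\dd u$; absorbing $-\tfrac1{2p}=-\tfrac12\int_0^1u^{p-1}\,\dd u$ then gives the key identity
\[
	\fd{\ln r(p)}p=\frac12\int_0^1 u^{p-1}\,\frac{1-u}{1+u}\,\dd u.
\]

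Next I would expand the kernel as a geometric series in $(1-u)/2$, namely $\frac{1-u}{1+u}=\sum_{m\ge1}2^{-m}(1-u)^m$, and integrate term by term, the interchange being justified by monotone convergence since all summands are nonnegative on $[0,1]$. Recognizing $\int_0^1 u^{p-1}(1-u)^m\,\dd u=m!/p^{(m+1)}$ (a Beta integral) yields precisely $\fd{\ln r(p)}p=\sum_{m\ge1}2^{-1-m}m!/p^{(m+1)}=\ell_\infty(p)$, identifying the derivative with $\ell_\infty$ from \eqref{eq:ell,u}. As every term is positive, the partial sums $\ell_k$ strictly increase to $\ell_\infty$, which is the left half of the asserted bracketing.

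For the upper bounds I would differentiate the functional equation \eqref{eq:r(p)r(p+1)}, i.e.\ $\ln r(p)+\ln r(p+1)=\tfrac12\ln\tfrac p{p+1}$, to get $\ell_\infty(p)+\ell_\infty(p+1)=\tfrac1{2p(p+1)}$; hence $\ell_\infty(p)=\tfrac1{2p(p+1)}-\ell_\infty(p+1)=u_\infty(p)$ in the notation of \eqref{eq:ell,u}, and since $\ell_k(p+1)$ strictly increases to $\ell_\infty(p+1)$, the $u_k(p)=\tfrac1{2p(p+1)}-\ell_k(p+1)$ strictly decrease to $u_\infty(p)=\ell_\infty(p)$, completing the chain. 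The two errors in \eqref{eq:psi-error} are then the tails $u_k(p)-\ell_\infty(p)=\sum_{m>k}2^{-1-m}m!/(p+1)^{(m+1)}$ and $\ell_\infty(p)-\ell_k(p)=\sum_{m>k}2^{-1-m}m!/p^{(m+1)}$; as $(p+1)^{(m+1)}>p^{(m+1)}$ termwise, the former is the smaller, which is the first nontrivial inequality.

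Finally, writing $a_m:=2^{-1-m}m!/p^{(m+1)}$, the ratio $a_{m+1}/a_m=\frac{m+1}{2(p+m+1)}<\tfrac12$ for $p>0$ bounds the tail geometrically, $\sum_{m>k}a_m<2a_{k+1}=2^{-k-1}(k+1)!/p^{(k+2)}$, and $p+j>j$ for $j=1,\dots,k+1$ gives the last step $(k+1)!/p^{(k+2)}<1/p$; the uniform geometric convergence in $k$ and the convergence as $p\to\infty$ are immediate from $2^{-k-1}/p\le 2^{-k-1}/p_0$ and from the $p^{(k+2)}$ growth of the denominator. The digamma form is obtained by substituting the identity $\psi(x+1)-\psi(x+\tfrac12)=\tfrac1p+2\fd{\ln r(p)}p$ (which is the first display above, under the correspondence \eqref{eq:corr}) into the proved inequalities, using $\tell_k=\tfrac1p+2\ell_k$ and $\tu_k=\tfrac1p+2u_k$. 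The only genuine care is in establishing the closed integral and justifying term-by-term integration; I expect that to be the main, though mild, obstacle, with positivity of all terms rendering the monotone-convergence interchange and every subsequent monotonicity claim routine.
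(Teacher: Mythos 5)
Your proof is correct and complete, but it reaches the key identity $\fd{\ln r(p)}p=\ell_\infty(p)$ by a genuinely different mechanism than the paper. Both arguments start from the same integral representation of the digamma difference (Gauss/Dirichlet formula plus the substitution $t=\sqrt v$, leading to $\int_0^1 u^{p-1}(1+u)^{-1}\,\dd u$), but from there the paper performs iterated integration by parts on $J_{p,k}:=\int_0^1 t^{p+k}(1+t)^{-2-k}\,\dd t$, obtaining the \emph{finite} sum $\ell_k(p)$ plus an explicit integral remainder $R_{p,k}=\frac{(k+1)!}{p^{(k+1)}}J_{p,k}$, and proves the middle inequality of \eqref{eq:psi-error} by showing $R_{p,k}$ is decreasing in $p$ (so that $u_k(p)-\fd{\ln r(p)}p=R_{p+1,k}<R_{p,k}$). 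You instead expand the kernel as $\frac{1-u}{1+u}=\sum_{m\ge1}2^{-m}(1-u)^m$ and integrate termwise via Beta integrals, which identifies the derivative with the \emph{infinite} series $\ell_\infty(p)$ in one stroke and converts both error quantities into tails of a positive series; your termwise comparison $(p+1)^{(m+1)}>p^{(m+1)}$ then yields the middle inequality more transparently than the remainder-monotonicity argument. The two routes reconverge at the end: the paper also invokes the series form with $a_{m+1}/a_m<\tfrac12$ to get the tail bound $2a_{k+1}=2^{-k-1}(k+1)!/p^{(k+2)}$, exactly as you do. What the paper's finite-expansion-with-remainder buys is a closed-form handle on the error at every stage without ever needing to justify an interchange of sum and integral; what your series route buys is brevity and the fact that all five inequalities in \eqref{eq:lnr'-identity&ineqs} and both error bounds become immediate consequences of positivity of the terms $a_m$. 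Your monotone-convergence justification of the termwise integration is sound, so there is no gap.
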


Let us now go back to the bounds $U_k(p)$ and $L_k(p)$, defined in \eqref{eq:U} and \eqref{eq:LL}. 
It is not hard to see that they follow some simple recursive relations: 
%, by which they can be easily computed: 

%\newpage

\begin{proposition}\label{prop:recur}
Take any $p>0$ and $k\in\{1,2,\dots\}$. Then 
\begin{alignat}{2}
	U_k(p)=&\sqrt{U_{k-1}(p)L_{k-1}(p)},&\quad
	L_k(p)=&\sqrt{\frac{\frac p{p+1}}{U_{k-1}(p+1)L_{k-1}(p+1)}}, \label{eq:U,L} \\  
		V_k(p)=&\sqrt{p\,\frac{V_{k-1}(p)}{V_{k-1}(p+1)} },&\quad
		M_k(p)=&\sqrt{p\,\frac{M_{k-1}(p)}{M_{k-1}(p+1)} }, \label{eq:V,M}
\end{alignat}
where  
\begin{equation}\label{eq:V,M defs}
	V_k(p):=\sqrt{p}\,U_k(p)\quad\text{and}\quad M_k(p):=\sqrt{p}\,L_k(p).  
\end{equation} 
The ``initial'' conditions are $U_0(p)=1$, $L_0(p)=\sqrt{\frac p{p+1}}$, $V_0(p)=\sqrt p$, and $M_0(p)=\frac p{\sqrt{p+1}}$. 
\end{proposition}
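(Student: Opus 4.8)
The plan is to reduce all four recursions to the single multiplicative identity $U_k(p)=\sqrt{U_{k-1}(p)L_{k-1}(p)}$ and to prove that identity at the level of logarithms. The one genuinely arithmetic ingredient is a first‑order recursion for the finite differences $\si_{p,m}$ of \eqref{eq:si}. Starting from
$$\si_{p,m-1}-\si_{p+1,m-1}=\sum_{j=0}^{m-1}(-1)^j\binom{m-1}{j}\ln(p+j)-\sum_{j=0}^{m-1}(-1)^j\binom{m-1}{j}\ln(p+1+j),$$
reindexing the second sum by $j\mapsto j-1$ and applying Pascal's rule $\binom{m}{j}=\binom{m-1}{j}+\binom{m-1}{j-1}$ to the interior terms, I would obtain the key recursion
$$\si_{p,m}=\si_{p,m-1}-\si_{p+1,m-1}\qquad(m\ge1),$$
with $\si_{p,0}=\ln p$.

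Next I would verify the first identity in \eqref{eq:U,L}. Taking logarithms and using \eqref{eq:LL},
$$\tfrac12\ln\big(U_{k-1}(p)L_{k-1}(p)\big)=\tfrac12\big(\ln U_{k-1}(p)-\ln U_{k-1}(p+1)\big)+\tfrac14\ln\tfrac p{p+1}.$$
By \eqref{eq:U} the first term on the right equals $\tfrac12\sum_{m=1}^{k-1}2^{-1-m}\big(\si_{p,m}-\si_{p+1,m}\big)$, which by the key recursion (with $m+1$ in place of $m$, so that $\si_{p,m}-\si_{p+1,m}=\si_{p,m+1}$) equals $\sum_{m=1}^{k-1}2^{-2-m}\si_{p,m+1}=\sum_{m=2}^{k}2^{-1-m}\si_{p,m}$; and since $\tfrac14\ln\tfrac p{p+1}=\tfrac14\si_{p,1}=2^{-2}\si_{p,1}$ is exactly the $m=1$ term, the right-hand side sums to $\sum_{m=1}^{k}2^{-1-m}\si_{p,m}=\ln U_k(p)$. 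This gives $U_k(p)=\sqrt{U_{k-1}(p)L_{k-1}(p)}$.

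The remaining three recursions then follow by substitution. Squaring the $U$-identity gives $U_k(p+1)^2=U_{k-1}(p+1)L_{k-1}(p+1)$, whence the $L$-identity in \eqref{eq:U,L} is immediate from \eqref{eq:LL}. For \eqref{eq:V,M} I would first record the auxiliary identity $U_{k-1}(p)L_{k-1}(p)=V_{k-1}(p)/V_{k-1}(p+1)$, obtained from \eqref{eq:LL} and \eqref{eq:V,M defs} after cancelling the factors $\sqrt p$, $\sqrt{p+1}$; combined with the $U$-identity this yields $V_k(p)=\sqrt p\,U_k(p)=\sqrt{p\,V_{k-1}(p)/V_{k-1}(p+1)}$. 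The $M$-identity is checked the same way: squaring the $L$-identity and using $L_{k-1}(p)=\sqrt{p/(p+1)}/U_{k-1}(p+1)$ shows $L_k(p)^2=\sqrt p\,L_{k-1}(p)/(\sqrt{p+1}\,L_{k-1}(p+1))$, which after multiplying by $p$ and using \eqref{eq:V,M defs} is exactly $M_k(p)^2=p\,M_{k-1}(p)/M_{k-1}(p+1)$. Notably, $V_k$ and $M_k$ obey the identical recursion and differ only through their initial data. Finally the initial conditions are read off directly: $U_0(p)=\exp 0=1$ by the empty-sum convention, $L_0(p)=\sqrt{p/(p+1)}$ from \eqref{eq:LL}, and $V_0(p)=\sqrt p$, $M_0(p)=\sqrt p\,\sqrt{p/(p+1)}=p/\sqrt{p+1}$ from \eqref{eq:V,M defs}.

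The only real obstacle is the index bookkeeping in the second step --- getting the powers of $2$ and the shift $\si_{p,m}\to\si_{p,m+1}$ to line up so that the $\tfrac14\ln\frac p{p+1}$ term supplies precisely the missing $m=1$ contribution. Everything after the $U$-identity is purely formal substitution, so I expect no difficulty there.
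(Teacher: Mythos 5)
Your proposal is correct and follows essentially the same route as the paper: both hinge on deriving $\si_{p,m}=\si_{p,m-1}-\si_{p+1,m-1}$ from Pascal's rule and translating it into a recursion for $\ln U_k(p)$, after which the remaining three identities follow by algebraic substitution via \eqref{eq:LL} and \eqref{eq:V,M defs}. The only difference is cosmetic ordering (the paper first obtains the $V$-recursion and then the $U,L$ ones, while you establish the first identity in \eqref{eq:U,L} directly), so no further comment is needed.
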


One possible way to apply Proposition~\ref{prop:recur} is to use the first recursive relation in \eqref{eq:V,M} to compute $V_k(p)$, and then get $U_k(p)$ and $L_k(p)$ by \eqref{eq:V,M defs} and \eqref{eq:LL}.   

%In view of \eqref{eq:LL}, a similar recursive relation exists between the lower bounds $L_k(p)$. 
%However, it may usually be more efficient to compute $U_k(p)$ first and then use \eqref{eq:LL} to find $L_k(p)$. 

In view of \eqref{eq:U} and \eqref{eq:si}, one can also rewrite $U_k(p)$ as %notes of 1/10/11 
\begin{equation}\label{eq:U,H}
	U_k(p)=\Big(\prod_{j=0}^k (p+j)^{(-1)^j H_{j,k}}\Big)^{1/2^{k+1}}, 
\end{equation}
where $H_{j,k}:=\sum_{m=1\vee j}^k 2^{k-m}\binom mj$ can be found recursively: 
\begin{equation*}%\label{eq:H recur}
	H_{j,j}=\ii{j\ge1}\quad\text{and}\quad
	H_{j,k+1}=2H_{j,k}+\binom{k+1}j
\end{equation*}
for nonnegative integers $j$ and $k$ such that $j\le k$; 
here, as usual, $\ii{\mathcal A}$ denotes the indicator of an assertion $\mathcal A$, so that $\ii{\mathcal A}$ equals $1$ if $\mathcal A$ is true and equals $0$ otherwise. 
Recalling also the known recursive relation \eqref{eq:binom} for the binomial coefficients, one sees that the calculation of the bound $U_k(p)$ takes $O(k^2)$ operations such as addition and multiplication, followed by raising the result to the power $1/2^{k+1}$; here we used the fact that is takes $O(\log N)$ multiplications to raise a given real number to the power of a natural number $N$.  
Since the relative errors $\de_{\lo,k}(p)$ and $\de_{\up,k}(p)$ are $O(1/2^k)$, it takes $O(\log^2\frac1\vp)$ operations to make the relative errors less than $\vp\in(0,1)$. 
 
Moreover, observe that the integers $H_{j,k}$ do not depend on $p$. 
Therefore, in rather typical situations (such as plotting or numerically integrating) when one would need to compute the bounds $U_k(p)$ and $L_k(p)$ for the same $k$ but many different values of $p$, the $H_{j,k}$'s need to be computed just once, and then they can be saved and used for all the different values of $p$. 
So, using \eqref{eq:U,H} to compute $U_k(p)$ for each of the many values of $p$, one will need to perform just $O(k)$ operations --- raising $p,\dots,p+k$ to the powers $H_{0,k},\dots,H_{k,k}$ or, what is essentially the same, compute $O(k)$ logarithms, namely, $\ln p,\dots,\ln(p+k)$ --- followed by $O(k)$ multiplications and additions. 
In some computational environments (such as Mathematica), the calculation of the logarithm of a given number may be about as fast as that the addition of two real numbers. 
Thus, one will need just $O(\log\frac1\vp)$ \big(rather than $O(\log^2\frac1\vp)$\big) operations to make the relative errors $\de_{\lo,k}(p)$ and $\de_{\up,k}(p)$ less than $\vp\in(0,1)$. 

There are a very large number of upper and lower bounds on the Wallis ratio and on the more general ratios of the form $\Ga(x+s)/\Ga(x)$, as well as related bounds for the digamma function $\psi$, as in \eqref{eq:psi}. The survey by Qi \cite{qi} lists 204 sources (only a rather small minority of which concern just background information). See e.g.\ \cite{boyd,chu62,gurland,kazarinoff,kershaw,mortici,slavic,watson}. 
However, there appear to be very few known convergent sequences of upper and lower bounds such as those presented in Theorem~\ref{th:} and Proposition~\ref{prop:r'} here. 

A convergent sequence of lower bounds on the Wallis ratio (and, hence, by the identity \eqref{eq:W(x)W(x+1/2)}, a convergent sequence of upper bounds) can be obtained immediately from the identity 
$W(x)=\sqrt{{}_2F_1(-\frac12,-\frac12;x;1)\,x}$ due to Watson \cite{watson}, which is in turn a corollary of a theorem by Gauss \cite[Theorem~2.2.2]{andrews} for the hypergeometric function. 
However, the hypergeometric series converges rather slowly, so that the number of operations it takes to make the relative errors less than $\vp\in(0,1)$ using the Gauss--Watson identity
is 
on the order of \rule[-6pt]{0pt}{10pt} $\big(\frac1\vp\big)^{1/(x+2)}$, which, for any fixed $x>-\frac12$ and $\vp\downarrow0$, is asymptotically much greater than the mentioned numbers $\log^2\frac1\vp$ and $\log\frac1\vp$. 
Also, with our approach one has the extra bonus of the upper and lower bounds on the difference \break 
$\psi(x+1)-\psi(x+\tfrac12)$, as provided by Proposition~\ref{prop:r'}. 

One should also mention the sequences of lower and upper bounds 
$\al_k(x,s):=(x+s)^{1-s}(x+s+k-1)^{(k)}/(x+k)^{(k)}$ and $\be_k(x,s):=(x+k+s)^{1-s}(x+s+k-1)^{(k)}/(x+k)^{(k)}$ on the more general ratio $\frac{\Ga(x+1)}{\Ga(x+s)}$ for any $s\in(0,1)$ 
given in \cite{shanbhag} (the same bounds, in different notation, were given also in \cite{merkle99}); however, 
$\frac{\be_k(x,s)}{\al_k(x,s)}-1
=\big(\frac{x+k+s}{x+k}\big)^{1-s}-1\sim\frac{s(1-s)}{x+k}$ as $k\to\infty$, 
so that at least one of the two relative errors when using this result is on the order of $\frac1k$ and hence the required number of operations is very large, on the order of $\frac1\vp$. 

In \cite{slavic,merkle96}, asymptotic expansions for $W(x)$ are given, with the relative errors diverging in $k$, but converging to $0$ as $x\to\infty$. 
Also, the sequences of bounds given in \cite{rao_upp} are shown in \cite{shanbhag} to diverge monotonically in $k$ away from the target.  

Sequences of upper and lower bounds on $\psi(x)$ were given in \cite[(36)]{merkle99}, where the difference between the $k$th upper bound and the $k$th lower bound is on the order of $\frac1{(x+k)^2}$, which is much greater than the geometrically decreasing absolute errors of the upper and lower bounds $\tu_k(x)$ and $\tell_k(x)$ on $\psi(x+1)-\psi(x+\tfrac12)$ in \eqref{eq:psi-identity&ineqs}. 

Using methods similar to the ones presented here, one can obtain sequences of upper and lower bounds on the more general expressions of the forms $\Ga(x+s)/\Ga(x)$ and $\psi(x+s)-\psi(x)$ for $s\in(0,1)$, which converge faster to the target than the ones in the existing literature briefly reviewed here, but not as fast as geometrically. These results will be given elsewhere. 

Graphs of the (signed) relative errors $\frac{\U_k(x)-W(x)}{W(x)}$ and $\frac{\L_k(x)-W(x)}{W(x)}$ \big(cf.\ \eqref{eq:UU,LL} and \eqref{eq:W-identity&ineqs}\big) for $k\in\{1,2,3\}$ and $x\in(-\frac12,16]$ are shown as the matrix of plots in Figure~\ref{fig:}; 
the columns of this matrix correspond to the values of $k$ (shown in the plot labels), while the rows correspond to the different intervals of values of $x$: $(-\frac12,2]$, $(2,6]$, and $(6,16]$. 

\begin{figure}[H] %[htbp]
	\centering
		\includegraphics[width=1.00\textwidth]{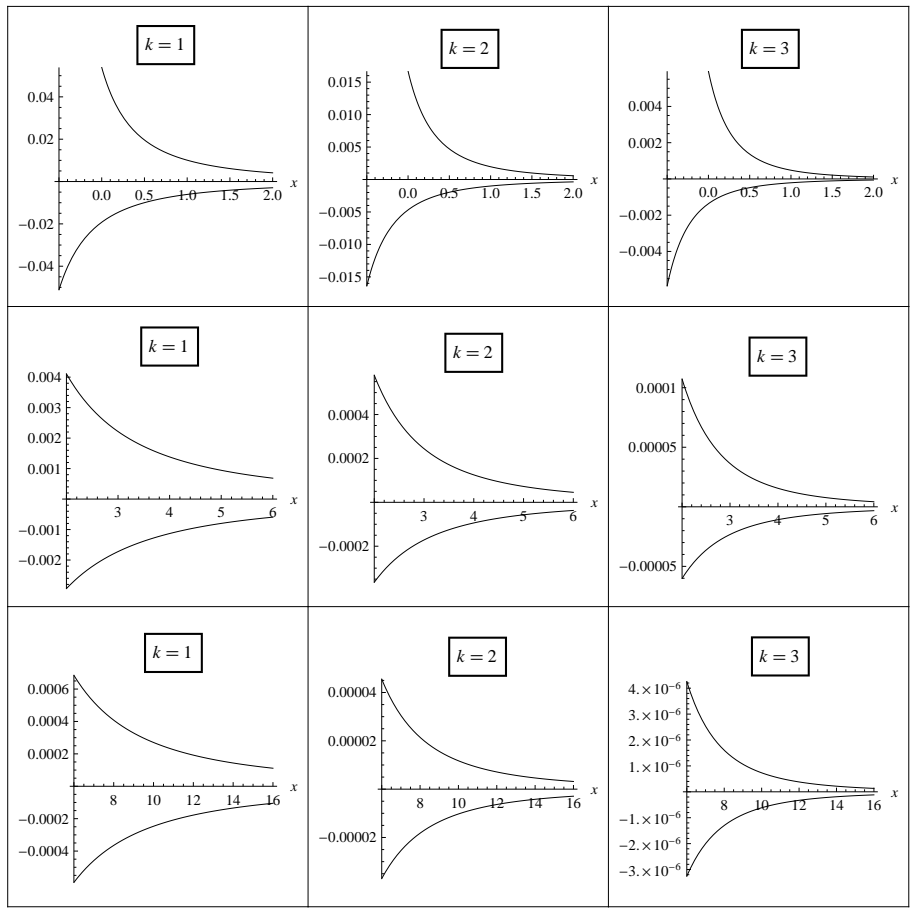}
	\caption{The relative errors of the bounds $\L_k(x)$ and $\U_k(x)$ for $k\in\{1,2,3\}$ and $x\in(-\frac12,16]$.}
	\label{fig:}
\end{figure}

It is seen that the relative errors decrease fast in $k$ and in $x$; moreover, they are rather decent even for $k=1$ and $x\in(-\frac12,2]$. 
Note also that, in view of the identity 
\begin{equation*}
	W(x)=W(x+m)\frac{(x+\frac12)^{(m)}}{(x+1)^{(m)}}
\end{equation*}
for all natural $m$ and all $x\in(-\frac12,\infty)$, without loss of generality one may assume $x$ to be large.

\section{Proofs}\label{proofs} 

Here we shall prove Proposition~\ref{prop:recur}, Proposition~\ref{prop:r'}, and Theorem~\ref{th:} -- in this order. 

\begin{proof}[Proof of Proposition~\ref{prop:recur}] 
Using the identity 
\begin{equation}\label{eq:binom}
\binom{m}{j}=\binom{m-1}{j-1}+\binom{m-1}{j}	
\end{equation}
(with the usual conventions that $\binom{m-1}{-1}=\binom{m-1}{m}=0$), one can see that 
$\si_{p,m}=\si_{p,m-1}-\si_{p+1,m-1}$ for all natural $m$, 
where $\si_{p,m}$ is the same as in \eqref{eq:si}, with $\si_{p,0}=\ln p$. 
In turn, this leads to the identity 
\begin{equation}\label{eq:S}
	S_{p,k}=\tfrac14\,\ln\tfrac p{p+1}+\tfrac12\,(S_{p,k-1}-S_{p+1,k-1})
\end{equation}
for all natural $k$, where $S_{p,k}:=\ln U_k(p)=\sum_{m=1}^k 2^{-1-m} \si_{p,m}$, with $S_{p,0}=0$. 
The identity \eqref{eq:S} can be rewritten as 
\begin{equation}\label{eq:U from S}
U_k(p)=\sqrt{\frac{\sqrt{p}\,U_{k-1}(p)}{\sqrt{p+1}\,U_{k-1}(p+1)} },  	
\end{equation}
which is obviously equivalent to the first identity in \eqref{eq:V,M}. 
Also, in view of \eqref{eq:LL}, \eqref{eq:U from S} can be rewritten as the first identity in \eqref{eq:U,L}. 
Using the latter identity with $p+1$ instead of $p$ to replace $U_k(p+1)$ in 
\eqref{eq:LL} by $\sqrt{U_{k-1}(p+1)L_{k-1}(p+1)}$, one obtains the second identity in \eqref{eq:U,L}. 
In the latter identity, use again \eqref{eq:LL} to replace $U_{k-1}(p+1)$ by 
$\sqrt{\frac p{p+1}}\big/L_{k-1}(p)$; this, together with the definition of $M_k(p)$ in \eqref{eq:V,M}, leads to the second identity in \eqref{eq:V,M}. 
To check the ``initial'' conditions is straightforward. 
Thus, Proposition~\ref{prop:recur} is proved. 
\end{proof}

\begin{proof}[Proof of Proposition~\ref{prop:r'}] 
The four inequalities in \eqref{eq:lnr'-identity&ineqs} trivially follow from the definitions \eqref{eq:ell,u}. 
Relations \eqref{eq:psi-identity&ineqs} follow by \eqref{eq:lnr'-identity&ineqs}, \eqref{eq:r=W}, and \eqref{eq:corr}, which imply $\psi(x+1)-\psi(x+\tfrac12)=\frac1p+2\fd{\ln r(p)}p$. 
Thus, to complete the proof of Proposition~\ref{prop:r'}, it suffices to verify the inequalities \eqref{eq:psi-error} \big(which will, in particular, imply the equalities in \eqref{eq:lnr'-identity&ineqs}\big). 

%Inequality \eqref{eq:psi-error2} is obvious. 
%Take indeed any $p\in(0,\infty)$ and $k\in\{0,1,\dots,\infty\}$. 
Using the Dirichlet formula \cite[Theorem~1.6.1]{andrews} and then the substitution $v:=\frac1{1+z}$, one has 
\begin{equation}
	\psi(x+\de)-\psi(x)=\int_0^\infty\Big(\frac1{(1+z)^x}-\frac1{(1+z)^{x+\de}}\Big)\frac{\dd z}z
	=\int_0^1 v^{x-1}\frac{1-v^\de}{1-v}\dd v
\end{equation}
for any positive $x$ and $\de$. 
Now the substitution $t:=v^{1/2}$ followed by an integration by parts yields 
\begin{equation}\label{eq:lnr'=J}
	2p\fd{\ln r(p)}p
	=p\Big[\psi\Big(\frac{p+1}2\Big)-\psi\Big(\frac p2\Big)\Big]-1
	=2\int_0^1 \frac{pt^{p-1}\dd t}{1+t}-1
	=2J_{p,0}, 
\end{equation}
where
\begin{equation}\label{eq:J}
	J_{p,k}:=\int_0^1\frac{t^{p+k}\dd t}{(1+t)^{2+k}}. 
\end{equation}
Here and subsequently, it is assumed that $p\in(0,\infty)$ and $k\in\{0,1,\dots\}$, unless otherwise indicated. 
Further integrating by parts, one %observes that %
obtains the recurrence relation  
\begin{equation*}
J_{p,k}=\frac{2^{-2 - k} + (2 + k)J_{p,k+1}}{p + k + 1}, % wallis2.nb
\end{equation*}
whence, by induction, 
\begin{equation*}%\label{eq:J_p0=}
	J_{p,0}=p\,\ell_k(p) +p\,R_{p,k},  
\end{equation*}
where $\ell_k(p)$ is as in \eqref{eq:ell,u} and 
\begin{align}
R_{p,k}&:=\frac{(k+1)!}{p^{(k+1)}} J_{p,k}. %\quad\text{and} 
\label{eq:R_pk=} %\\
\end{align}
%Toward that end, let us bound $J_{p,k}$. 
So, by \eqref{eq:lnr'=J}, one has 
\begin{equation}\label{eq:lnr'=ell+R}
	\fd{\ln r(p)}p-\ell_k(p)=R_{p,k}. 
\end{equation}

%Consider the first two 
For any $t\in(0,1)$ and $k\ge0$, the integrand 
$\frac{t^{p+k}}{(1+t)^{2+k}}$  
in \eqref{eq:J} does not exceed $\big(\frac t{1+t}\big)^k$, which is less than $2^{-k}$. 
Hence, $0<R_{p,k}<\frac{k+1}p\,2^{-k}\underset{k\to\infty}\longrightarrow0$ and hence, by \eqref{eq:lnr'=ell+R}, $\ell_k(p)\underset{k\to\infty}\longrightarrow\fd{\ln r(p)}p$, so that one has the first equality in \eqref{eq:lnr'-identity&ineqs}. 
On the other hand, \eqref{eq:r(p)r(p+1)}
implies 
\begin{equation}\label{eq:lnr'(p),lnr'(p+1)}
	\frac1{2p(p+1)}-\fd{\ln r(p+1)}p=\fd{\ln r(p)}p. 
\end{equation}
Therefore and because $\ell_k(p+1)\to\fd{\ln r(p+1)}p$, the definition of $u_k(p)$ in \eqref{eq:ell,u} yields $u_k(p)\to\fd{\ln r(p)}p$, as $k\to\infty$. 
In turn, this implies the second equality in \eqref{eq:lnr'-identity&ineqs}.  
The first inequality in \eqref{eq:psi-error} follows immediately from \eqref{eq:lnr'-identity&ineqs}. 

Next, observe that, by \eqref{eq:J}, $J_{p,k}$ is decreasing in $p$. 
Since the denominator of the fraction in \eqref{eq:R_pk=} is positive and increasing in $p>0$, one sees that the ``remainder'' $R_{p,k}$ is decreasing in $p>0$. 
So, in view of    
the definition of $u_k(p)$ in 
\eqref{eq:ell,u} and the identities \eqref{eq:lnr'(p),lnr'(p+1)} and \eqref{eq:lnr'=ell+R}, 
\begin{equation*}
	u_k(p)-\fd{\ln r(p)}p=\fd{\ln r(p+1)}p-\ell_k(p+1)=R_{p+1,k}<R_{p,k}=\fd{\ln r(p)}p-\ell_k(p), 
\end{equation*}
whence the second inequality in \eqref{eq:psi-error} follows. 

The last inequality in \eqref{eq:psi-error} is obvious. 
%To complete the proof of Proposition~\ref{prop:r'}, it remains to prove the third, penultimate inequality in \eqref{eq:psi-error}. 
%Toward that end, 
Finally, introduce $a_m:=\frac{2^{-1-m} m!}{p^{(m+1)}}$ and 
note that $\frac{a_{m+1}}{a_m}<\frac12$ for all natural $m$, so that 
\begin{equation*}
	\fd{\ln r(p)}p-\ell_k(p)
	=\ell_\infty(p)-\ell_k(p)
	=\sum_{m=k+1}^\infty a_m<2a_{k+1},  
\end{equation*}
which proves the third, penultimate inequality in \eqref{eq:psi-error} and thus completes the proof of Proposition~\ref{prop:r'}. 
\end{proof}

\begin{proof}[Proof of Theorem~\ref{th:}] 
Take any $p\in(0,\infty)$ and $k\in\{0,1,\dots\}$. 
Observe that $\frac{m!}{p^{(m+1)}}=
\sum _{j=0}^m \frac{(-1)^j \binom{m}{j}}{p+j}$ for all natural $m$; 
this can be checked by induction in $m$ or as follows: 
\begin{align*}
	\sum _{j=0}^m \frac{(-1)^j \binom{m}{j}}{p+j}
	&=\sum _{j=0}^m (-1)^j \binom{m}{j}\int_0^\infty e^{-(p+j)u}\dd u \\
	&=\int_0^\infty e^{-pu} \sum _{j=0}^m \binom{m}{j} (-e^{-u})^j \dd u \\
	&=\int_0^\infty e^{-pu} (1-e^{-u})^m \dd u \\
	&=\int_0^1 z^{p-1} (1-z)^m \dd z
	=\frac{\Ga(p)\Ga(m+1)}{\Ga(p+m+1)}
	=\frac{m!}{p^{(m+1)}}. 
\end{align*}
So, by \eqref{eq:ell,u}, 
\begin{equation*}
	\ell_k(p)=\sum _{m=1}^k 2^{-1-m}\,\sum _{j=0}^m \frac{(-1)^j \binom{m}{j}}{p+j}. 
\end{equation*}
Recalling now \eqref{eq:U} and \eqref{eq:si}, 
one has 
\begin{equation}\label{eq:int ell}
	\int_p^\infty\ell_k(s)\dd s=-\ln U_k(p);  
\end{equation}
here, it was taken into account that 
$\sum _{j=0}^m (-1)^j \binom{m}{j}=0$ and hence \break 
$\sum _{j=0}^m (-1)^j \binom{m}{j}\,%\break
\ln(s+j)
=\sum _{j=0}^m (-1)^j \binom{m}{j}\,[\ln(s+j)-\ln s]\underset{s\to\infty}\longrightarrow0$, for each $m=1,2,\dots$. 
In view of \eqref{eq:ell,u} and \eqref{eq:LL}, one similarly has 
\begin{equation}\label{eq:int u}
\begin{aligned}
		\int_p^\infty u_k(s)\dd s
		=&\int_p^\infty\frac{\dd s}{2s(s+1)}-\int_p^\infty\ell_k(s+1)\dd s \\
		=&-\ln\sqrt{\frac p{p+1}}+\ln U_k(p+1)
		=-\ln L_k(p).
\end{aligned} 	
\end{equation}
Also, it is easy to see that $r(p)\to1$ as $p\to\infty$. 
Therefore, % and in view of \eqref{eq:lnr'=ell+R}, 
\begin{align}\label{eq:ln r(p)=}
	\ln r(p)=-\int_p^\infty\fd{\ln r(s)}s \dd s. 
\end{align} 
So, the 
inequalities \eqref{eq:L<r<U} follow by integrating those in \eqref{eq:lnr'-identity&ineqs} in view of \eqref{eq:int ell}, \eqref{eq:int u}, and \eqref{eq:ln r(p)=}. 
Similarly integrating inequalities \eqref{eq:psi-error},  
one sees that 
\begin{align*}
	0<\ln\frac{r(p)}{L_k(p)}<\ln\frac{U_k(p)}{r(p)}
	<&\frac{(k+1)!}{2^{k+1}}\,\int_p^\infty\frac{\dd s}{s^{(k+2)}} \\
	<&\frac{2^{-k-1}(k+1)!}{(p+2)^{(k)}}\,\int_p^\infty\frac{\dd s}{s(s+1)} \\ 
	=&\frac{2^{-k-1}(k+1)!}{(p+2)^{(k)}}\,\ln\frac{p+1}p
	<\rho^*_{p,k}, 
\end{align*}
where $\rho^*_{p,k}$ is as in \eqref{eq:rrho}. 
This 
%\big(together with the inequality $1-e^{-\rho}\le e^\rho-1$ for $\rho\in\R$\big)
yields \eqref{eq:err bound} and also implies that $L_k(p)$ and $U_k(p)$ both converge to $r(p)$ as $k\to\infty$, so that the identities \eqref{eq:identity} follow as well. 
Thus, Theorem~\ref{th:} is proved. 
\end{proof}

\bibliographystyle{abbrv}
%\bibliographystyle{ims}
%\bibliography{are.citations}
%\bibliography{citat}

%\bibliography{citations}

\bibliography{C:/Users/Iosif/Documents/mtu_home01-30-10/bib_files/citations}
%\bibliography{C:/Users/Iosif/Documents/mtu_home12-22-08/bib_files/citations}

\end{document}